\documentclass[%a4paper,
12pt,reqno]{amsart}
\usepackage{verbatim}
\usepackage{amssymb}

\usepackage{enumerate}
\usepackage[active]{srcltx}
\numberwithin{equation}{section}

\usepackage{t1enc}

\newtheorem{theorem}{Theorem}[section]

\newtheorem{corollary}[theorem]{Corollary}

\newtheorem{problem}[theorem]{Problem}

\theoremstyle{definition}
\newtheorem{definition}[theorem]{Definition}

\theoremstyle{remark}

\newcommand{\itemprefix}{}
\makeatletter
\newcommand{\myitem}{%
\item\protected@edef\@currentlabel{\itemprefix\theenumi}%
}
\makeatother

\newcommand{\setm}{\setminus}

\newcommand{\subs}{\subset}

\def\con{\subseteq}

\def\reals{\mathbb{R}}
\def\from{\colon}

\author[I. Juh\'asz]{Istv\'an Juh\'asz}
\address      { Alfr\'ed Rényi Institute of Mathematics%
%, Eötvös Loránd Research Network
}
\email{juhasz@renyi.hu}

\author[J. van Mill]{Jan van Mill}
\address{University of Amsterdam}
\email{j.vanMill@uva.nl}

\thanks{The first author was supported
by NKFIH grant no. K129211.}

\makeatletter
  \@namedef{subjclassname@2020}{%
  \textup{2020} Mathematics Subject Classification}
   \makeatother

\subjclass[2020]{54A25, 54C30, 54D05}
\keywords{nowhere constant family of maps, resolvable space, $\pi$-base, connected, locally connected}

\title{Nowhere constant families of maps and resolvability}

\begin{document}

\begin{abstract}
If $X$ is a topological space and $Y$ is any set then we call a family $\mathcal{F}$ of maps from $X$
to $Y$ {\em nowhere constant} if for every non-empty open set $U$ in $X$ there is
$f \in \mathcal{F}$ with $|f[U]| > 1$, i.e. $f$ is not constant on $U$. We prove the following result
that improves several earlier results in the literature.

If $X$ is a topological space for which $C(X)$, the family of all continuous maps of $X$ to $\mathbb{R}$, is
nowhere constant
and $X$ has a $\pi$-base consisting of connected sets then
$X$ is $\mathfrak{c}$-resolvable.
\end{abstract}

\maketitle

\section{Introduction}

The question about how resolvable are crowded locally connected spaces has been around
for some time. C. Costantini proved in \cite{C} that {\em regular} such spaces are $\omega$-resolvable.
Actually, it is proved there that local connected can be weakened to having a $\pi$-base consisting of
connected sets. We simply call such spaces \emph{$\pi$-connected}.

In \cite{P} it is stated that that I.V. Yaschenko proved that every crowded locally connected
Tykhonov space is $\mathfrak{c}$-resolvable, however, as far as we know, no proof of this has
been published.

A. Dehghani and M. Karavan claim in \cite{DK}
that every crowded locally connected functionally Hausdorff space is
$\mathfrak{c}$-resolvable. They get this as a corollary of their more general Theorem 2.6,
in the proof of which, however, we found a gap,
and we do not know if this gap can be fixed. On Page 88 of their paper, lines -7 and -6, they claim that 
$\bigcup_{\gamma\not=\alpha} A_\gamma = f^{-1}(\bigcup_{\gamma\not=\alpha} D_\gamma)$ is not closed 
in any nonempty connected open subset $V$ of $U$. But this can only be concluded in case the restriction of $f$ to $V$ is not constant,
and they only assume that some continuous function $g : V \to \mathbb{R}$ is not constant.

It follows, however, from our results below that their Corollary~2.7 of Theorem 2.6 is correct.
In fact, the following property weaker than being crowded and functionally Hausdorff suffices: for every
non-empty open set $U$ there is a continuous map of the whole space to $\mathbb{R}$
that is not constant on $U$.

We are going to get this from a more general result that will make use of the following concept.

\begin{definition}
Let $X$ be a topological space and $Y$  any set. We call a family $\mathcal{F}$ of maps from $X$
to $Y$ {\em nowhere constant} (in short: $NWC$) if for every non-empty open set $U$ in $X$ there is
$f \in \mathcal{F}$ with $|f[U]| > 1$, i.e. $f$ is not constant on $U$.
\end{definition}

We note the trivial fact that any space that admits a NWC family of maps is crowded,
i.e. has no isolated points.

\section{The results}\label{results}

We first present a very general result that connects NWC families of maps and resolvability.

\begin{theorem}\label{tm:gen}
Let $\mathcal{F}$ be a NWC family of maps of the topological space $X$ to the set $Y$.
Moreover, $\mathcal{B}$ is a $\pi$-base of $X$
and $\mathcal{A}$ is a {\em disjoint} family of subsets of $Y$ such that, putting
$\mathcal{B}_f = \{B \in \mathcal{B} : |f[B]| > 1\}$ and $U_f = \bigcup (\mathcal{B} \setm \mathcal{B}_f)$
for any $f \in \mathcal{F}$, we have
$$\forall\,f \in \mathcal{F}\,\forall\,B \in \mathcal{B}_f\,\forall\,A \in \mathcal{A} \,(A \cap f[B \setm U_f] \ne \emptyset).$$
Then $X$ is $|\mathcal{A}|$-resolvable.
\end{theorem}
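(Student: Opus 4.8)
The plan is to produce the $|\mathcal{A}|$ required pairwise disjoint dense sets directly, by well-ordering $\mathcal{F}$ and ``colouring'' each point of $X$ according to the \emph{first} member of $\mathcal{F}$ that fails to be constant near it. Concretely, I would fix enumerations $\mathcal{F} = \{f_\xi : \xi < \lambda\}$ and $\mathcal{A} = \{A_\alpha : \alpha < \kappa\}$, where $\kappa = |\mathcal{A}|$, and for $x \in X$ let $\xi(x)$ denote the least $\xi < \lambda$ with $x \notin U_{f_\xi}$, when such a $\xi$ exists. Then set
\[
D_\alpha = \{x \in X : \xi(x) \text{ is defined and } f_{\xi(x)}(x) \in A_\alpha\} \qquad (\alpha < \kappa).
\]
Because $\mathcal{A}$ is a disjoint family, the sets $D_\alpha$ are automatically pairwise disjoint: any $x \in D_\alpha \cap D_\beta$ would have its single value $f_{\xi(x)}(x)$ lying in both $A_\alpha$ and $A_\beta$. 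So the whole content of the proof is in showing that each $D_\alpha$ is dense.

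To prove density, fix $\alpha < \kappa$ and a nonempty open set $U$. Since $\mathcal{B}$ is a $\pi$-base there is $B \in \mathcal{B}$ with $B \subseteq U$, and since $\mathcal{F}$ is NWC there is $f \in \mathcal{F}$ with $B \in \mathcal{B}_f$; hence the set of ordinals $\xi < \lambda$ for which some $B' \in \mathcal{B}$ satisfies $B' \subseteq U$ and $B' \in \mathcal{B}_{f_\xi}$ is nonempty. Let $\zeta$ be its least element and fix a witness $B^* \in \mathcal{B}$ with $B^* \subseteq U$ and $B^* \in \mathcal{B}_{f_\zeta}$. Applying the hypothesis of the theorem to $f_\zeta$, $B^*$ and $A_\alpha$ gives a point $x \in B^* \setminus U_{f_\zeta}$ with $f_\zeta(x) \in A_\alpha$. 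The claim is that $\xi(x) = \zeta$; granting this, $f_{\xi(x)}(x) = f_\zeta(x) \in A_\alpha$, so $x \in D_\alpha \cap B^* \subseteq D_\alpha \cap U$, and we are done.

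It is clear that $\xi(x) \le \zeta$, since $x \notin U_{f_\zeta}$. Suppose toward a contradiction that $\xi(x) = \eta < \zeta$. Then $x \notin U_{f_\eta}$. Now $U_{f_\eta} = \bigcup(\mathcal{B} \setminus \mathcal{B}_{f_\eta})$ contains every member of $\mathcal{B}$ on which $f_\eta$ is constant, so if $f_\eta$ were constant on $B^*$ we would get $x \in B^* \subseteq U_{f_\eta}$, a contradiction; hence $B^* \in \mathcal{B}_{f_\eta}$. But also $B^* \subseteq U$, so $\eta$ already belongs to the set of ordinals whose least element is $\zeta$ — contradicting $\eta < \zeta$. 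Therefore $\xi(x) = \zeta$, which completes the density argument, and $\{D_\alpha : \alpha < \kappa\}$ witnesses that $X$ is $|\mathcal{A}|$-resolvable.

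I expect the last minimality argument to be the one subtle point. The difficulty it overcomes is that a point $x$ is coloured by $f_{\xi(x)}$, whereas the density witness inside $U$ is produced by a function $f_\zeta$ chosen because of its behaviour on a basic set $B^* \subseteq U$, and these two functions have no obvious relation — worse, since $\mathcal{B}$ is only a $\pi$-base it need not contain any neighbourhood of $x$ at all, so one cannot localize inside $U$ around $x$. The resolution is the simple but crucial observation that $x \notin U_f$ forces $f$ to be non-constant on \emph{every} member of $\mathcal{B}$ that contains $x$ — in particular on $B^*$ — which is exactly what lets the minimal choice of $\zeta$ pin down $\xi(x)$.
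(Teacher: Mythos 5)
Your proof is correct. It does, however, organize the argument differently from the paper. The paper runs a transfinite recursion: at stage $\alpha$ it picks some $f_\alpha\in\mathcal{F}$ that is non-constant on a basic set not yet covered by $\mathcal{B}_\alpha=\bigcup_{\beta<\alpha}\mathcal{B}_{f_\beta}$, adds the sets $S(f_\alpha,B,A)=f_\alpha^{-1}[A]\cap(B\setminus U_{f_\alpha})$ for the newly covered $B$, proves disjointness across stages from the inclusion $\bigcup(\mathcal{B}_{f_\alpha}\setminus\mathcal{B}_\alpha)\subseteq\bigcap_{\beta<\alpha}U_{f_\beta}$, and stops when $\mathcal{B}_\alpha=\mathcal{B}$, which yields density. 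You instead well-order all of $\mathcal{F}$ once and define the dense sets by a single formula, colouring each point $x$ by the value $f_{\xi(x)}(x)$ of the least function with $x\notin U_{f_{\xi(x)}}$; disjointness is then immediate from the disjointness of $\mathcal{A}$, and all the work is concentrated in the local minimality argument for density (choosing the least $\zeta$ with some $B'\subseteq U$ in $\mathcal{B}_{f_\zeta}$), which in effect replays the paper's recursion locally inside $U$. Both arguments turn on the same key observation — $x\notin U_f$ forces $f$ to be non-constant on every member of $\mathcal{B}$ containing $x$ — but your static ``least-witness colouring'' avoids the recursion's bookkeeping (the families $\mathcal{B}_\alpha$, the halting argument, and the inductive verification of disjointness and of $B\cap D_\alpha(A)\neq\emptyset$), at the price of hiding the disjointness mechanism that the paper's stagewise construction displays explicitly; the two proofs yield the same conclusion in the same generality.
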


\begin{proof}
We are going to use transfinite recursion to produce a disjoint collection $\{D(A) : A \in \mathcal{A}\}$
of dense subsets of $X$.

To do that, we first introduce the following piece of notation. For $f \in \mathcal{F},\,\,B \in \mathcal{B}_f$, and
$A \in \mathcal{A}$ we let
$$
S(f, B, A) = f^{-1}[A] \cap (B \setm U_f).
$$
It follows from our assumptions that $S(f, B, A) \ne \emptyset$.

We start our recursive construction by chosing $f_0 \in \mathcal{F}$ such that $\mathcal{B}_{f_0} \ne \emptyset$
and then define $$D_0(A) = \bigcup \{S(f_0,B,A) : B \in \mathcal{B}_{f_0}\}$$ for all $A \in \mathcal{A}$.
Then $\{D_0(A) : A \in \mathcal{A}\}$ is disjoint because $D_0(A) \subs {f_0}^{-1}[A]$ for each $A \in \mathcal{A}$,
moreover $B \cap D_0(A) \ne \emptyset$ whenever $B \in \mathcal{B}_{f_0}$ and $A \in \mathcal{A}$.

Now, assume that $\alpha > 0$ and we have already defined $f_\beta \in \mathcal{F}$ and the family $\{D_\beta(A) : A \in \mathcal{A}\}$
for all $\beta < \alpha$, and consider $$\mathcal{B}_\alpha = \bigcup \{\mathcal{B}_{f_\beta} : \beta < \alpha\}.$$
If $\mathcal{B}_\alpha = \mathcal{B},$ then our construction stops.
Otherwise, we may pick $f_\alpha \in \mathcal{F}$ such that $\mathcal{B}_{f\alpha} \setm \mathcal{B}_\alpha \ne \emptyset$.
In this case we define $$D_\alpha(A) = \bigcup \{D_\beta(A) : \beta < \alpha\} \cup \bigcup\{S(f_\alpha, B, A) : B \in \mathcal{B}_{f\alpha} \setm \mathcal{B}_\alpha\}$$
for each $A \in \mathcal{A}$.

Note that we clearly have $\bigcup (\mathcal{B}_{f\alpha} \setm \mathcal{B}_\alpha) \subs \bigcap \{U_{f_\beta} : \beta < \alpha\}$,
hence it may be verified by straightforward transfinite induction that $\{D_\alpha(A) : A \in \mathcal{A}\}$ is
disjoint, moreover we have $B \cap D_\alpha(A) \ne \emptyset$ whenever $A \in \mathcal{A}$ and $B \in \mathcal{B}_{f_\beta}$ with $\beta \le \alpha$.

Of course, this construction must stop at some ordinal $\alpha$, in which case putting $D(A) = \bigcup \{D_\beta(A) : \beta < \alpha\}$
the disjoint family $\{D(A) : A \in \mathcal{A}\}$ consists of dense subsets of $X$ because
$B \cap D(A) \ne \emptyset$ for any $B \in \mathcal{B}$ and $A \in \mathcal{A}$ and $\mathcal{B}$ is a $\pi$-base of $X$.
\end{proof}

Now we deduce from Theorem \ref{tm:gen} our promised result concerning $\pi$-connected spaces.

\begin{corollary}\label{co:picon}
If $X$ is any $\pi$-connected space for which $C(X)$, the family of all continuous maps of $X$ to $\mathbb{R}$,
is NWC then $X$ is $\mathfrak{c}$-resolvable.
\end{corollary}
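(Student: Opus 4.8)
The plan is to deduce this from Theorem~\ref{tm:gen}, applied with $\mathcal{F}=C(X)$ (which is NWC by hypothesis), $Y=\mathbb{R}$, with $\mathcal{B}$ a $\pi$-base of $X$ consisting of connected open sets --- available because $X$ is $\pi$-connected --- and with $\mathcal{A}$ a disjoint family of $\mathfrak{c}$ many dense subsets of $\mathbb{R}$. A convenient choice of $\mathcal{A}$ is the set of all cosets $r+\mathbb{Q}$ of the rationals in the additive group $\mathbb{R}$: there are $\mathfrak{c}$ of them, they are pairwise disjoint, and each of them is dense in $\mathbb{R}$. If the hypothesis of Theorem~\ref{tm:gen} can be verified for these data, the theorem yields that $X$ is $|\mathcal{A}|$-resolvable, i.e.\ $\mathfrak{c}$-resolvable, which is the assertion.

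So the real work is to check, for every $f\in C(X)$, every $B\in\mathcal{B}_f$ and every $A\in\mathcal{A}$, that $A\cap f[B\setm U_f]\ne\emptyset$. Since each $A$ is dense in $\mathbb{R}$, it is enough to show that $f[B\setm U_f]$ has non-empty interior in $\mathbb{R}$, and in fact I would aim for the sharper inclusion
\[
\operatorname{int}_{\mathbb{R}}\bigl(f[B]\bigr)\con f[B\setm U_f].
\]
Granting this, the argument closes: $B$ is connected, so $f[B]$ is a connected subset of $\mathbb{R}$, and $|f[B]|>1$ because $B\in\mathcal{B}_f$, hence $f[B]$ is a non-degenerate interval whose $\mathbb{R}$-interior is a non-empty open interval, and every member of $\mathcal{A}$ meets every non-empty open interval.

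The heart of the matter --- and exactly the point where the argument in \cite{DK} breaks down, since there only local non-constancy of some $g\colon V\to\mathbb{R}$ is available --- is the displayed inclusion, which I would prove as follows. Fix $y\in\operatorname{int}_{\mathbb{R}}(f[B])$ and set $C=f^{-1}(y)\cap B$. Picking $a,b\in f[B]$ with $a<y<b$, the sets $\{x\in B:f(x)<y\}$ and $\{x\in B:f(x)>y\}$ are open, non-empty and disjoint, so connectedness of $B$ forces $C\ne\emptyset$. Suppose now, towards a contradiction, that $C\con U_f$. Since $U_f=\bigcup(\mathcal{B}\setm\mathcal{B}_f)$ and $\mathcal{B}\setm\mathcal{B}_f$ is precisely the set of members of $\mathcal{B}$ on which $f$ is constant, each $x\in C$ lies in some $B'\in\mathcal{B}$ with $f$ constant on $B'$; as $f(x)=y$, this constant value is $y$, so $B'\con f^{-1}(y)$, and hence $B'\cap B$ is an open neighbourhood of $x$ contained in $C$. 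Thus $C$ is open in $B$; being also closed in $B$ (as $\{y\}$ is closed and $f$ is continuous) and non-empty, $C=B$ by connectedness, contradicting $|f[B]|>1$. Therefore $C\not\con U_f$, so $y\in f[B\setm U_f]$, and the inclusion follows. Apart from this claim the proof is a routine instantiation of Theorem~\ref{tm:gen}, so this claim is the one genuine obstacle.
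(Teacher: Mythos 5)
Your proposal is correct and follows essentially the same route as the paper: instantiate Theorem~\ref{tm:gen} with $\mathcal{F}=C(X)$, a $\pi$-base $\mathcal{B}$ of connected sets, and $\mathfrak{c}$ many pairwise disjoint dense subsets of $\mathbb{R}$, and verify its hypothesis by showing that values of $f$ on $B$ are attained on $B \setm U_f$, using that $f[B]$ is a non-degenerate interval which every dense $A$ meets. The only (inessential) difference is in packaging the key claim: the paper proves $f[B]=f[B\setm U_f]$ by observing that the boundary $H_t$ of $B\cap f^{-1}(t)$ in $B$ is non-empty and misses $U_f$, whereas you prove $\operatorname{int}_{\mathbb{R}}(f[B])\con f[B\setm U_f]$ by showing that $B\cap f^{-1}(y)\con U_f$ would make this level set clopen in the connected set $B$ --- two formulations of the same connectedness argument, both sufficient for the application.
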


\begin{proof}
To apply Theorem \ref{tm:gen}, we of course put $Y = \mathbb{R}$ and $\mathcal{F} = C(X)$. For $\mathcal{B}$
we choose a $\pi$-base of $X$ consisting of connected sets and let $\mathcal{A}$ be any disjoint collection of
{\em dense} subsets of $\mathbb{R}$ with $|\mathcal{A}| = \mathfrak{c}$.

Note that for any $f \in C(X)$ and $B \in \mathcal{B}_f$ we have $|f[B]| > 1$, hence $f[B]$ is a non-degenerate
interval in $\mathbb{R}$, being a non-singleton connected subset of $\mathbb{R}$, and thus $A \cap f[B] \ne \emptyset$
holds for all $A \in \mathcal{A}$. Consequently, we shall be done if we prove the following claim.

\smallskip \noindent {\bf Claim}. For any $f \in C(X)$ and $B \in \mathcal{B}_f$ we have $$f[B] = f[B \setm U_f].$$
To see this, pick any $t \in f[B]$ and note that $B \cap f^{-1}(t)$ is a non-empty proper closed subset
of the connected subspace $B$. Properness follows from $|f[B]| > 1$. Consequently, the boundary $H_t$ of
$B \cap f^{-1}(t)$ in $B$ is non-empty. Also, we have $H_t \subs B \cap f^{-1}(t)$, the latter set being closed in $B$.

Now, take any $B' \in \mathcal{B} \setm \mathcal{B}_f$, then $f[B'] = \{t'\}$ for some $t' \in \mathbb{R}$.
If $t' \ne t$ then we have $B' \cap f^{-1}(t) = \emptyset$, hence $B' \cap H_t = \emptyset$ as well.
If, on the other hand, $t' = t$ then $B'\cap B$ is in the interior of $B \cap f^{-1}(t)$ in $B$,
hence we have $B' \cap H_t = \emptyset$  again. This, however, means that $H_t \cap U_f = \emptyset$,
consequently, $t \in f[H_t] \subs f[B \setm U_f]$, completing the proof of the Claim.
\end{proof}

A closer inspection of this proof reveals that we did not use the full force of continuity for the
members $f$ of the NWC family $\mathcal{F} = C(X)$. What we used was that $f$ preserves connectedness
for $B \in \mathcal{B}$,
i.e. $f[B]$ is connected in $\mathbb{R}$ for $B \in \mathcal{B}$, moreover that $f^{-1}(t)$ is closed in
$X$ for all $t \in \mathbb{R}$.

\section{Discussion and questions}

Corollary \ref{co:picon} is clearly sharp in the sense that the cardinal $\mathfrak{c}$
cannot be replaced with anything bigger
because there are very nice crowded and locally connected spaces of cardinality $\mathfrak{c}$.
The natural question arises, however, if such a space $X$ is maximally resolvable, i.e.
$\Delta(X)$-resolvable, where $\Delta(X)$ is the minimum cardinality of a non-empty
open set in $X$.
(
In a recent arXiv preprint \cite{L} A. Lipin proved that if $2^\mathfrak{c} = 2^{(\mathfrak{c}^+)}$
then there is a locally connected and pseudocompact Tykhonov space $X$ such that $\Delta(X) > \mathfrak{c}$ but
$X$ is not $\mathfrak{c}^+$-resolvable. So, at least consistently, Corollary \ref{co:picon} is
sharp in that sense as well.

As we wrote in the introduction, Costatini~\cite{C} proved that crowded $\pi$-connected regular spaces. 
are $\omega$-revolvable. As usual, in his treatment regular implies $T_1$.
But his proof actually works for all crowded $\pi$-connected {\em quasi-regular} spaces as well, 
without the use of any separation axiom.

Recall that a space is quasi-regular if for every non-empty open $U$ there is a non-empty open $V$
such that $\overline{V} \subset U$. 
The only slight modification we need is that in this case the definition of {\em crowded} has to be replaced by
the following assumption: there is no finite indiscrete open subspace. For $T_0$-spaces this assumption is clearly
equivalent with the usual one, i.e. not having any isolated points. 

Hewitt~\cite{H} gave an example of a regular connected space $X$ of cardinality~$\omega_1$. Let $Y=\lambda_f(X)$ be the 
superextension of $X$ consisting of all finitely generated maximal linked systems consisting of closed subsets of $X$. 
Then $Y$ also has size $\omega_1$, since the collection of all finite subsets of $X$ has size $\omega_1$,
moreover $Y$ is Hausdorff, connected and locally connected, see \cite[IV.3.4(v) and (viii)]{V}. 

We claim that $Y$ is also regular and adopt the terminology of~\cite{V}. To prove this, let $m\in Y$ be arbitrary, 
and let $A$ be a closed subset of $Y$ not containing $m$. Let $F\con X$ be a finite defining set for $m$, and let $n = m{\restriction}F$. 
There is a finite collection of open subsets $\mathcal{U}$ of $X$ such that $m\in \bigcap_{U\in \mathcal{U}} U^+ \con Y\setminus A$. 
For each $U\in \mathcal{U}$, let $N_U\in n$ be such that $N_U\con U$. By regularity of $X$, we may pick an open neighborhood 
$V_U$ of $N_U$ whose closure is contained in $U$. Then $\bigcap_{U\in \mathcal{U}} V_U^+$ is a closed neighborhood of $m$ that misses $A$. 

Hence $Y$ is a connected, locally connected regular space which by Costantini's result is $\omega$-resolvable. (In this special case, 
there is also a direct proof of this). But $Y$ has cardinality $\omega_1$, hence it is not $\mathfrak{c}$-resolvable if the Continuum Hypothesis fails. 
So, for crowded locally connected regular spaces a positive answer to the following question is the best we can hope for.

\begin{problem}
Let $X$ be regular, crowded and locally connected (respectively, $\pi$-connected). Is $X$ $\omega_1$-resolvable?
\end{problem}

We cannot resist mentioning here that
it is still widely open whether non-singleton connected regular spaces are resolvable, see e.g. \cite{P1} for details and references. 
This seems to be one of the most central open problems in the area.

We call a space $X$ \emph{nowhere 0-dimensional} if no non-empty open subspace of $X$ is §-dimensional.
Clearly, any crowded locally connected, even $\pi$-connected, space is nowhere 0-dimensional.

Assume that $X$ is a nowhere 0-dimensional Tychonoff space. We may assume that $X$ is a subspace of $\reals^I$ for some set $I$. 
For every $i\in I$, let $\pi_i\from \reals^I \to \reals$ be the $i$th projection. Then for every nonempty open subset $U$ of $X$
there exists $i\in I$ such that $\pi_i(U)$ has nonempty interior. For otherwise $U$ would be a subspace
of a product of 0-dimensional spaces, and so would itself be 0-dimensional.
Hence the collection of projections $\mathcal{P} = \{\pi_i{\restriction}X : i \in I\}$ is NWC in a (very) strong sense. 
This observation lead us to the following problem.

\begin{problem}
Is every nowhere 0-dimensional Tychonoff space resolvable ($\omega$-resolvable, $\mathfrak{c}$-resolvable)?
\end{problem}

\end{document}